\documentclass[a4,12 pt,twosided,reqno]{amsart}
\usepackage{amscd,amssymb}
\usepackage[arrow,matrix]{xy}
\usepackage{graphicx}
\usepackage{color}
\newfont{\black}{msbm10 scaled 1200}

\topmargin=0.1in \textwidth5.9in \textheight7.85in
\oddsidemargin=0.3in \evensidemargin=0.3in \theoremstyle{plain}
\newtheorem{theorem}[subsection]{Theorem}
\newtheorem{lemma}[subsection]{Lemma}
\newtheorem{proposition}[subsection]{Proposition}

\theoremstyle{definition}

\title{\bf {One-weight Inequalities for Riemann-Liouville and Hardy  Transform in Lebesgue Space with Variable Exponent}  }
\begin{document}
\begin{center}
{\bf  Riemann-Liouville and higher dimensional Harday operators for non-negative decreasing function  in $L^{p(\cdot)}$ spaces}
\end{center}
\vskip+.5cm
\begin{center}
{Ghulam Murtaza and Muhammad Sarwar}
\end{center}
\vskip+.5cm

{\bf Abstract.} In this paper one-weight inequalities  with general weights for  Riemann-Liouville transform and $ n-$ dimensional fractional integral operator in variable exponent Lebesgue spaces defined on $\mathbb{R}^{n}$ are investigated. In particular, we derive necessary and sufficient conditions  governing one-weight inequalities for these operators on the cone of non-negative decreasing functions in $L^{p(x)}$ spaces.

{\bf 2000 Mathematics Subject Classification:} 42B20, 42B25, 46E30.
\vskip+0.2cm
{\bf Key Words and Phrases}: Variable exponent Lebesgue spaces, Riemann-Liouville transform,   $n-$ dimensional  fractional integral operator, one--weight inequality.

\section{introduction}
We derive   necessary and sufficient conditions  governing the one-weight inequality for the Riemann-Liouville operator
$$ R_{\alpha}f(x)=\frac{1}{x^{\alpha}}\int\limits_{0}^{x}\frac{f(t)}{(x-t)^{1-\alpha}}dt \ \ \ \  \ 0<\alpha<1,$$
and  $n-$dimensional fractional integral operator
$$I_{\alpha}g(x)=\frac{1}{|x|^{\alpha}}\int\limits_{|y|<|x|}\frac{g(t)}{|x-t|^{n-\alpha}}dt\ \ \ 0<\alpha<n, $$
on the cone  of non-negative decreasing  function in $L^{p(x)}$ spaces.

In the last two decades a considerable interest of researchers was attracted to the investigation of the mapping properties of integral operators in so called Nakano spaces $L^{p(\cdot)}$ (see e.g., the monographs \cite{CF}, \cite{DHHR} and references therein). Mathematical problems related to these spaces arise in applications  to mechanics of the continuum medium. For example, M. Ru\v{z}i\v{c}ka \cite{Ruz} studied
the problems in the so called rheological and electrorheological fluids, which lead to spaces
with variable exponent.

Weighted estimates for the Hardy transform
$$ (Hf)(x)= \int\limits_0^x f(t) dt, \;\;\; x>0,  $$
in  $L^{p(\cdot)}$ spaces were derived in the papers \cite{DS} for power-type weights and in \cite{EdKoMe1}, \cite{EdKoMe2}, \cite{K}, \cite{CrMa}, \cite{MaHa}  for general weights. The Hardy inequality for non-negative decreasing functions was studied  in \cite{SJ}, \cite{BS2}.

Weighted problems for the Riemann-Liouville transform in $L^{p(x)}$ spaces were explored  in the papers \cite{EdMe}, \cite{EdKoMe1}, \cite{AshKoMe}, \cite{KMS} (see also the monograph \cite{MeMo}).

Historically, one and two weight Hardy inequalities on the cone of  non-negative decreasing  functions defined on ${\Bbb{R}}_+$ in the classical Lebesgue spaces  were  characterized by M. A. Arino and B. Muckenhoupt  \cite{AM} and E. Sawyer \cite{Saw} respectively.


It should be emphasized that the operator $I_{\alpha}f(x)$ is the weighted truncated potential. The trace inequity for this operator in the classical Lebesgue spaces was established by E. Sawyer \cite{SW} (see also the monograph \cite{EdKoMe}, Ch.6   for related topics).

In general,   the modular inequality
   $$\int\limits_{0}^{1}\Big|\int\limits_{0}^{x} f(t)dt\Big|^{q(x)} v(x)dx\leq c\int\limits_{0}^{1}\big|f(t)\big|^{p(t)}w(t)dt\ \ \ \ \ \  \eqno{(*)}$$
for the Hardy operator is not valid (see \cite{Si} , Corollary 2.3, for details). Namely
the following fact holds: if there exists a positive constant $c$ such that inequality
(*) is true for all
$f\geq 0, \hbox {where}\  q;\  p;\ w\  \hbox{and}\  v$ are non-negative measurable functions,
 then there exists $b \in [0 \   \  1]$ such that
$w(t) > 0$  for almost every    $t < b $;  $v(x) = 0 $ for almost every  $ x > b$, and
$p(t)$ and $q(x)$ take the same constant values almost everywhere for $t \in (0; b)$ and
$x \in (0; b)\cap\{v\neq 0\}.$

To get the main result we use the following pointwise inequities

  $$c_1(Tf)(x)\leq (R_\alpha f)(x)\leq c_2(Tf)(x),$$
$$c_3(Hg)(x)\leq (I_\alpha g)(x)\leq c_4(Hg)(x),$$
for non-negative decreasing functions, where $c_1$, $c_2$, $c_3$ and $c_4$ are constants are independents of $f$, $g$ and $x$, and
 $$Tf(x)=\frac{1}{x}\int\limits_{0}^{x}f(t)dt, \ \ \ \ \ \ \ \ \ \ \ \ \ \ \ \ \
 Hg(x)=\frac{1}{|x|^{n}}\int\limits_{|y|<|x|}g(y)dy.$$

In the sequel by the symbol $Tf\approx Tg$ we means that there are positive constants $c_1$ and $c_2$ such that $c_1 Tf(x)\leq Tg(x)\leq c_2 Tf(x).$ Constants in inequalities will be mainly denoted by $c$ or $C$; the symbol ${\Bbb{R}}_+$ means the interval $(0, +\infty)$.

\section{preliminaries }

 We say that a radial function $f:\mathbb{R}^{n}\longrightarrow \mathbb{R}_{+}$ is decreasing if there is a decreasing  function $g:\mathbb{R}_{+}\longrightarrow \mathbb{R}_{+}$ such that  $g(|x|)=f(x),$ \  $x\in \mathbb{R}^{n}.$ We will denote $g$ again by $f.$
Let $p:\mathbb{R}^{n}\longrightarrow \mathbb{R}^{n}$ be a measurable function, satisfying the conditions
$p^-=\hbox{ess}\!\!\!\!\ \inf\limits_{x\in\mathbb{R}^{n} }p(x)>0,\ p^+=\hbox{ess}\!\!\!\ \sup\limits_{x\in\mathbb{R}^{n} }p(x)<\infty. $

Given $p:\mathbb{R}^{n}\longrightarrow \mathbb{R}_{+}$ such that $0<p^-\leq p^+<\infty$,  and a non-negative measurable function (weight) $u$ in $\mathbb{R}^{n},$ let us define the following local oscillation of $p:$
$$\varphi_{p(\cdot),u(\delta)}=\hbox{ess}\!\!\!\!\!\!\!\!\!\!\!\!\!\!\!\!\ \!\!\ \sup\limits_{x\in B(0,\delta)\cap\ \hbox{\hbox{supp u}}}\!\!\!\ \ p(x)\  -\ \  \ \   \hbox{ess}\!\!\!\!\!\!\!\!\!\!\!\!\!\!\!\!\ \!\!\!\ \inf\limits_{x\in B(0,\delta)\cap\ \hbox{\hbox{supp u}}}\!\!\!\ \ p(x),$$
where $B(0,\delta)$ is the ball with center $0$ and radius $\delta$.

We observe that $\varphi_{p(\cdot),u(\delta)}$ is non-decreasing and positive function such that
$$\lim\limits_{\delta\rightarrow\infty}\varphi_{p(\cdot),u(\delta)}=p^+_u-p^-_u, \eqno{(1)}$$
where $p^+_u$ and $p^-_u$ denote the essential infimum  and supremum  of $p$ on the support of $u.$ respectively.\\

 By the similar manner it is defined (see \cite{SJ}) the function $\psi_{p(\cdot),u(\eta)}$ for an exponent $ p:{\Bbb{R}}_+ \mapsto {\Bbb{R}}_+$ and weight $v$ on ${\Bbb{R}}_+$:
 $$\varphi_{p(\cdot),v(\varepsilon)}=\hbox{ess}\!\!\!\!\!\!\!\!\!\!\!\!\!\!\!\!\ \!\!\ \sup\limits_{x\in B(0,\delta)\cap\ \hbox{\hbox{supp v}}}\!\!\!\ \ p(x)\  -\ \  \ \   \hbox{ess}\!\!\!\!\!\!\!\!\!\!\!\!\!\!\!\!\ \!\!\!\ \inf\limits_{x\in (0, \eta)\cap\ \hbox{\hbox{supp v}}}\!\!\!\ \ p(x),$$

Let $D({\Bbb{R}}_+)$ be the class of non-negative decreasing functions on ${\Bbb{R}}_+$ and let $DR({\Bbb{R}}^n)$  be the class of all non--negative  radially decreasing   functions on ${\mathbb{R}}^{n}$.
Suppose that $u$ is measurable a.e.  positive  function (weight) on ${\mathbb{R}}^{n}$. We denote by $L^{p(x)}(u,{\mathbb{R}}^{n})$,
the class of all non--negative functions on ${\mathbb{R}}^{n}$ for which
$$
S_p(f)=\int\limits_{\mathbb{R}^{n}}|f(x)|^{p(x)}u(x) d\mu(x) < \infty.
$$

For essential properties of  $L^{p(x)}$ spaces we refer to the papers \cite{KR} \cite{Sa} and  the monographs \cite{DHHR}, \ \cite{CF}.

 Under the symbol $L^{p(x)}_{dec}(u,{\mathbb{R}}_{+})$ we mean the class of non-negative decreasing  functions on $\mathbb {R}_{+} $  from  $L^{p(x)}(u,{\mathbb{R}}^{n})\cap DR({\Bbb{R}}^n)$.

 Now we list the well-known results regarding one-weight inequality for the operator $T$. For the following statement we refer to \cite{AM}. \\
     {\bf Theorem A}. {\em Let  $r$ be constant such that $0<r<\infty.$  Then the inequity
$$\int\limits_{0}^{\infty}v(x)(Tf(x))^{r}dx\leq c \int\limits_{0}^{\infty}v(x)(f(x))^{r}dx,\ \ \ \ \  f\in L^{r}(v,\mathbb{R}_{+}),\; f\downarrow \eqno{(2)}$$
for a weight $v$ holds, if and only if there exists a positive constant $C$ such that for all $s>0$
$$\int\limits_{s}^{\infty}\bigg(\frac{s}{x}\bigg)^{r}v(x)dx\leq C\int\limits_{0}^{s}v(x)dx. \eqno{(3)}$$}

\vskip+0.2cm

Condition $(3)$ is called $B_r$ condition and was introduced in \cite{AM}.

\vskip+0.2cm

{\bf Theorem B}\cite{SJ}. {\em Let $v$ be a weight on $(0,\infty)$ and $p:\mathbb{R}_{+}\longrightarrow \mathbb{R}_{+}$
such that $0<p^-\leq p^+<\infty$, and assume that $\psi_{p(\cdot),v(0^+)}=0.$ The following facts are equivalent:\\
 (a) There exists a positive constant $c$ such that for any  $f\in D({\Bbb{R}}_+)$,
 $$\int\limits_{0}^{\infty}\big(Tf(x)\big)^{p(x)}v(x)dx\leq C\int\limits_{0}^{\infty}\big(f(x)\big)^{p(x)}v(x)dx.\eqno{(4)}$$
(b) For any $r, s>0,$
$$\int\limits_{r}^{\infty}\bigg(\frac{r}{sx}\bigg)^{p(x)}v(x)dx\leq C\int\limits_{0}^{r}\frac{v(x)}{s^{p(x)}}dx.\eqno{(5)}$$
(c)  $p_{|_{\mbox{supp v}}}\equiv p_0 $\ \ \  a.e  and $v\in B_{p_{0}}.$}
\\

\begin{proposition}\label{th3.1}
For the operators $T, H, R_\alpha$ and $I_\alpha$,   the following relations hold:\\
$(a)$
$$  R_\alpha f\approx Tf,  \ \ \ \  \ \  \ 0<\alpha<1,\ \ \ \  f \in D({\Bbb{R}}_+) ;$$
$(b)$
$$ I_\alpha g\approx Hg,  \ \ \ \ \ \ \ \ \  0<\alpha<n,\ \ \ \ g\in DR({\Bbb{R}}^n).$$

\end{proposition}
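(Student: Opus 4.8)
The plan is to prove both equivalences by establishing the two-sided pointwise bounds directly, exploiting in each case that the kernel is comparable to a constant multiple of the trivial averaging kernel once the monotonicity of the function is brought in on the singular part. Throughout I would split the domain of integration at half the radius, since away from the diagonal the Riemann--Liouville (respectively truncated Riesz) kernel is harmless, while near the diagonal the decreasing hypothesis is what saves us.

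For part $(a)$ the lower bound is immediate and does not even require monotonicity: for $0<t<x$ one has $x-t<x$, hence $(x-t)^{1-\alpha}<x^{1-\alpha}$ because $1-\alpha>0$, so
$$R_\alpha f(x)=\frac{1}{x^\alpha}\int_0^x\frac{f(t)}{(x-t)^{1-\alpha}}\,dt\ge\frac{1}{x^\alpha\,x^{1-\alpha}}\int_0^x f(t)\,dt=Tf(x),$$
which gives $c_1=1$. For the upper bound I would split $\int_0^x=\int_0^{x/2}+\int_{x/2}^x$. On $(0,x/2)$ we have $x-t>x/2$, so the kernel is bounded by $2^{1-\alpha}x^{\alpha-1}$ and this piece is at most $2^{1-\alpha}Tf(x)$. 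On $(x/2,x)$ I would use that $f$ is decreasing to replace $f(t)$ by $f(x/2)$ and compute $\int_{x/2}^x(x-t)^{\alpha-1}\,dt=(x/2)^\alpha/\alpha$; the remaining task is then to absorb the pointwise value $f(x/2)$, which is handled by the elementary monotonicity estimate $Tf(x)\ge\frac{1}{x}\int_0^{x/2}f(t)\,dt\ge\frac12 f(x/2)$. Combining the two pieces yields $R_\alpha f(x)\le 2^{1-\alpha}(1+\alpha^{-1})Tf(x)$.

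Part $(b)$ runs along the same lines, now with the kernel $|x-y|^{\alpha-n}$. For the lower bound I would use $|x-y|<2|x|$ on the ball $\{|y|<|x|\}$ to get $I_\alpha g(x)\ge 2^{\alpha-n}Hg(x)$. The upper bound again splits the ball into $\{|y|<|x|/2\}$, where $|x-y|>|x|/2$ makes the contribution at most $2^{n-\alpha}Hg(x)$, and the annulus $\{|x|/2\le|y|<|x|\}$, where radial monotonicity lets me bound $g(y)$ by $g(|x|/2)$. The one step requiring genuine computation is the truncated Riesz-potential integral $\int_{|y|<|x|}|x-y|^{\alpha-n}\,dy$, which after the shift $z=x-y$ is dominated by $\int_{|z|<2|x|}|z|^{\alpha-n}\,dz=\omega_{n-1}(2|x|)^\alpha/\alpha$; and, exactly as in the one-dimensional case, the pointwise value is absorbed via $Hg(x)\ge |x|^{-n}\int_{|y|<|x|/2}g\ge v_n 2^{-n}g(|x|/2)$, where $v_n$ and $\omega_{n-1}$ denote the volume of the unit ball and the area of the unit sphere.

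The main obstacle, such as it is, lies entirely in the near-diagonal annulus of the upper bounds: there the kernel is genuinely singular and cannot be compared to a constant, so one must both integrate the singularity explicitly and replace the integrand by its boundary value using monotonicity. Everything else is a direct size comparison of kernels, and I expect no trouble with the constants, which can simply be tracked through the two splittings. The only place where the hypotheses $0<\alpha<1$ (respectively $0<\alpha<n$) are essential is precisely in guaranteeing integrability of the singular kernel and positivity of the exponent $1-\alpha$ (respectively $n-\alpha$) used in the off-diagonal estimates.
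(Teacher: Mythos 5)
Your proof is correct and follows essentially the same route as the paper's: the same splitting of the integral at half the radius, monotonicity of the function to handle the singular near-diagonal piece, and absorption of the pointwise value $f(x/2)$ (resp.\ $g(|x|/2)$) into the average $Tf$ (resp.\ $Hg$). The only minor differences are that you evaluate the annulus integral $\int_{F_x}|x-y|^{\alpha-n}\,dy$ directly via the shift $z=x-y$ and polar coordinates where the paper uses a layer-cake (distribution-function) decomposition, and in the lower estimate of $(b)$ you use the correct bound $|x-y|\le 2|x|$ (yielding the constant $2^{\alpha-n}$) where the paper inaccurately asserts $|x-y|\le|x|$ for $|y|<|x|$ --- a small slip on its part that your version quietly repairs.
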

\begin{proof}$(a)$  Upper estimate. Represent $R_\alpha f$ as follows:
$$R_\alpha f(x)=\frac{1}{x^{\alpha}}\int\limits_{0}^{x/2}\frac{f(t)}{(x-t)^{1-\alpha}}dt+
\frac{1}{x^{\alpha}}\int\limits_{x/2}^{x}\frac{f(t)}{(x-t)^{1-\alpha}}dt=S_1(x)+S_2(x).$$
Observe that if $t<x/2,$ then $x/2<x-t$. Hence
$$S_1(x)\leq c\frac{1}{x}\int\limits_{0}^{x/2}f(t)dt\leq cTf(x),$$
where the positive constant $c$ does not depend on $f$ and $x$. Using the fact that $f$ is decreasing  we find that
$$S_2(x)\leq cf(x/2)\leq cTf(x).$$

Lower estimate follows immediately by using the fact that $f$ is non-negative and the obvious estimate $x-t\leq x$ and
$0<t<x.$

$(b)$ Upper estimate. Let us represent the operator $I_\alpha$ as follows:
$$I_\alpha g(x)=\frac{1}{|x|^{\alpha}}\int\limits_{|y|<|x|/2}\frac{g(y)}{|x-y|^{n-\alpha}}dy+
\frac{1}{|x|^{\alpha}}\int\limits_{|x|/2<|y|<|x|}\frac{g(y)}{|x-y|^{n-\alpha}}dy$$
$$\hspace{-5.5cm}=: S'_1(x)+S'_2(x).$$
Since $|x|/2\leq |x-y|$ for $|y|<|x|/2$ we have that
$$S'_1(x)\leq\frac{c}{|x|^{n}}\int\limits_{|y|<|x|/2}g(y)dy\leq cHg(x).$$
 Taking into account the fact that  $f$ is radially decreasing on $\mathbb{R}^{n}$ we find that there is a decreasing function $f:\mathbb{R}_{+}\longrightarrow \mathbb{R}_{+}$ such that
$$S'_2(x)\leq f(|x|/2)\cdot\frac{1}{|x|^{\alpha}}\int\limits_{|x|/2<|y|<|x|}|x-y|^{\alpha-n}dy$$
Let $F_x=\{y:|x|/2<|y|<|x|\}$. Then we have
$$\hspace{-6cm}\int\limits_{F_x}|x-y|^{\alpha-n}dy=\int\limits_{0}^{\infty}\big|\{y\in F_x:|x-y|^{\alpha-n}>t\}\big|dt$$
$$\leq \int\limits_{0}^{|x|^{\alpha-n}}\big|\{y\in F_x:|x-y|^{\alpha-n}>t\}\big|dt+
\int\limits_{|x|^{\alpha-n}}^{\infty}\big|\{y\in F_x:|x-y|^{\alpha-n}>t\}\big|dt$$
$$\hspace{-11cm}=:I_1+I_2.$$
It is easy to see that
$$I_1\leq\int\limits_{0}^{|x|^{\alpha-n}}|B(0,|x|)|dt=c|x|^{\alpha},$$
while using the fact that $\frac{n}{n-\alpha}>1$ we find that
$$I_2\leq\int\limits_{|x|^{\alpha-n}}^{\infty}\big|\{y\in F_x:|x-y|\leq t^\frac{1}{\alpha-n}\}\big|dt
\leq c\int\limits_{|x|^{\alpha-n}}^{\infty}t^\frac{n}{\alpha-n}dt=c_{\alpha,n}|x|^{\alpha}.$$
Finally we conclude  that
$$S'_2(x)\leq cf(|x|/2)\leq cHf(x).$$
Lower estimate follows immediately by using the fact that $f$ is non-negative and the obvious estimate $|x-y|\leq |x|,$ where $0<|y|<|x|.$
\end{proof}

We will also need the following statement:

\begin{lemma}
Let $r$ be a constant such that  $0<r<\infty.$ Then the inequality
$$\int\limits_{\mathbb{R}^{n}}\big(Hf(x)\big)^{r}u(x)dx\leq C\int\limits_{\mathbb{R}^{n}}\big(f(x)\big)^{r}u(x)dx,\ \ \ \ \ \ \  \ \ \  f\in L^{r}_{dec}(u,\mathbb{R}^{n}) \eqno{(6)}$$
holds, if and only if there exists a positive constant C such that for all $s>0,$
$$\int\limits_{|x|>s}\Big(\frac{s}{|x|}\Big)^{r}|x|^{r(1-n)}u(x)dx\leq C\int\limits_{|x|<s}|x|^{r(1-n)}u(x)dx. \eqno{(7)}$$

\end{lemma}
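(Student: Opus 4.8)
The plan is to reduce the $n$-dimensional inequality (6) to a one-dimensional weighted Hardy inequality on the cone of decreasing functions and then to invoke Theorem~A. Since $f$ is radially decreasing, I would write $f(x)=f(|x|)$ and pass to polar coordinates; this shows that $Hf$ is again radial, with
\[
Hf(x)=\frac{\omega_{n-1}}{|x|^{n}}\int_0^{|x|}f(\tau)\tau^{n-1}\,d\tau,\qquad \omega_{n-1}=|S^{n-1}|,
\]
and that only the spherical average of the weight enters either side of (6). Setting $\tilde u(\rho)=\rho^{n-1}\int_{S^{n-1}}u(\rho\sigma)\,d\sigma$, the inequality (6) becomes equivalent to
\[
\int_0^\infty\Big(\frac{1}{\rho^{n}}\int_0^{\rho}f(\tau)\tau^{n-1}\,d\tau\Big)^{r}\tilde u(\rho)\,d\rho\le C\int_0^\infty f(\rho)^{r}\,\tilde u(\rho)\,d\rho,
\]
the factor $\omega_{n-1}^{r}$ being absorbed into $C$.

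First I would convert the averaging operator on the left into the genuine one-dimensional operator $T$. The substitution $w=\rho^{n}$ (equivalently $t=\tau^{n}$ in the inner integral) accomplishes exactly this: with $F(w)=f(w^{1/n})$, which is again non-negative and decreasing, one finds $\frac{1}{\rho^{n}}\int_0^{\rho}f(\tau)\tau^{n-1}\,d\tau=\frac{1}{n}\,TF(\rho^{n})$. Tracking the Jacobian $d\rho=\tfrac1n w^{1/n-1}\,dw$ turns the displayed inequality into
\[
\int_0^\infty\big(TF(w)\big)^{r}W(w)\,dw\le C\int_0^\infty F(w)^{r}W(w)\,dw,\qquad F\downarrow,
\]
with the transformed weight $W(w)=\tfrac1n\,w^{1/n-1}\,\tilde u(w^{1/n})$.

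Now Theorem~A applies directly: this last inequality holds for every decreasing $F$ if and only if $W\in B_r$, that is, $\int_a^\infty (a/w)^{r}W(w)\,dw\le C\int_0^a W(w)\,dw$ for all $a>0$. Undoing $w=\rho^{n}$, $a=s^{n}$ and returning to $\mathbb{R}^{n}$ then rewrites this $B_r$ condition as the desired balance condition on $u$. The necessity half can also be obtained independently, and more transparently, by testing (6) with the radial indicators $f=\chi_{B(0,s)}$: for these $Hf(x)=c_n$ when $|x|<s$ and $Hf(x)=c_n(s/|x|)^{n}$ when $|x|>s$, so (6) forces the balance condition at once and provides a clean check on the exponents.

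The hard part, and the step I would treat as the main obstacle, is the precise accounting of the powers of $|x|$ through the polar reduction and the substitution $w=\rho^{n}$, since it is exactly this bookkeeping that fixes the exponent attached to $s/|x|$ and the power of $|x|$ multiplying $u$ in the final condition. Carried out as above, the condition that emerges naturally carries the exponent $nr$ on $s/|x|$ — the factor $n$ coming from the Jacobian of $w=\rho^{n}$ — together with the bare weight $u$, rather than the exponent $r$ and the extra factors $|x|^{r(1-n)}$ displayed in (7). Before finalising I would therefore reconcile my computation with the statement of (7), using the indicator test above as the arbiter, since the two forms must coincide.
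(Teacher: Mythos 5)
Your reduction is sound, and it takes a genuinely different route from the paper at exactly the decisive point. The paper also passes to polar coordinates, but instead of your substitution $w=\rho^{n}$ it absorbs the Jacobian into the function and the weight, setting $\bar f(t)=t^{n-1}f(t)$ and $\tilde u(t)=t^{(n-1)(1-r)}\bar u(t)$, so that (6) becomes the inequality $\int_0^\infty (T\bar f)^{r}\tilde u\,dt\le C\int_0^\infty \bar f^{r}\tilde u\,dt$; condition (7) is then precisely the Ari\~no--Muckenhoupt $B_r$ condition for $\tilde u$. The defect of that route is that $\bar f$ is \emph{not} decreasing (it is $t^{n-1}$ times a decreasing function), so Theorem A cannot be applied to it; worse, since any decreasing $g$ can be written $g(t)=t^{n-1}\big(t^{1-n}g(t)\big)$ with $t^{1-n}g(t)$ decreasing, the cone $\{t^{n-1}f:\ f\downarrow\}$ strictly \emph{contains} the decreasing cone, so the paper's reduced inequality is stronger than the one Theorem A characterizes. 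This makes (7) necessary, but the sufficiency step in the paper's proof is an unjustified appeal to Theorem A. Your substitution keeps $F(w)=f(w^{1/n})$ decreasing, so Theorem A applies legitimately, and the bookkeeping you were worried about is in fact correct: the condition that comes out is $\int_{|x|>s}(s/|x|)^{nr}u(x)\,dx\le C\int_{|x|<s}u(x)\,dx$, the standard $n$-dimensional $B_r$ condition, and your indicator test $f=\chi_{B(0,s)}$ confirms its necessity.

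The reconciliation you postponed cannot be carried out as an equivalence, so the discrepancy you flagged is real and lies in the paper, not in your computation. One direction does hold: since $(s/|x|)^{r}|x|^{r(1-n)}=s^{-r(n-1)}(s/|x|)^{nr}$ and $|x|^{r(1-n)}\ge s^{r(1-n)}$ on $B(0,s)$, your condition implies (7). The converse fails for $r>1$, $n\ge 2$: a radial weight with mass $c_j=\tau_j^{(n-1)(r-1)}$ concentrated on annuli $|x|\approx\tau_j$ with lacunary radii, say $\tau_j=2^{-j!}$, makes the right-hand side of (7) equal to $\sum_{\tau_i<s}c_i\tau_i^{r(1-n)}=\#\{i:\tau_i<s\}=\infty$ at every scale, so (7) holds trivially, while at $s$ slightly below $\tau_j$ your condition demands $\tau_j^{(n-1)(r-1)}\le C\,\tau_{j+1}^{(n-1)(r-1)}$, which fails; by your indicator test, (6) fails as well. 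Thus (7), which is what one gets by testing with $\frac1s|x|^{1-n}\chi_{B(0,s)}$ (the test function used in Proposition \ref{th4.2}), is necessary but not sufficient as printed. Carried to its end, your argument proves the corrected characterization with exponent $nr$ and the bare weight $u$; the only thing missing from your proposal is the recognition that the ``arbiter'' you proposed decides against the stated form of (7) rather than against your Jacobian accounting.
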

\begin{proof}
We shall see that inequality $(6)$ is equivalent to the inequality
$$\int\limits_{0}^{\infty}\tilde{u}(t)\big(T\bar{f}(t)\big)^{r}dt\leq C\int\limits_{0}^{\infty}\tilde{u}(t)\big(\bar{f}(t)\big)^{r}dt,$$
where $\tilde{u}(t)=t^{(n-1)(1-r)}\bar{u}(t)$, $\bar{f}(t)=t^{n-1}f(t)$ and  $\bar{u}(t)=\int\limits_{S_0}u(t\bar{x})d\sigma(\bar{x}).$\\
Indeed, using polar the coordinates in $\mathbb{R}^{n}$ we have
\begin{eqnarray*}
\int\limits_{\mathbb{R}^{n}}\big(Hf(x)\big)^{r}u(x)dx&=& \int\limits_{\mathbb{R}^{n}}
u(x)\bigg(\frac{1}{|x|^{n}}\int\limits_{|y|<|x|}f(y)dy\bigg)^{r}dx  \\
&=& \int\limits_{0}^{\infty}t^{n-1}\bigg(\frac{1}{|t|^n}\int\limits_{|y|<|x|}f(y)dy\bigg)^{r}
\bigg(\int\limits_{S_0}u(t\bar{x})d\sigma\bar{x}\bigg)dt  \\
&=&C \int\limits_{0}^{\infty}t^{n-1}t^{-nr}t^{r}
\bigg(\frac{1}{t}\int\limits_{0}^{t}\tau^{n-1}f(\tau)d\tau\bigg)^{r}\bar{u}(t)dt \\
&=&C\int\limits_{0}^{\infty}t^{n-1}t^{r(1-n)}\bar{u}(t)\bigg(\frac{1}{t}\int\limits_{0}^{t}\bar{f}(\tau)d\tau\bigg)^{r}dt\\
&\leq& C\int\limits_{0}^{t}\tilde{u}(t)\big(\bar{f}(t)\big)^{r}dt\\
&=& C t^{(n-1)(1-r)}t^{(n-1)r}\big(f(t)\big)^{r}dt\\
&=& C\int\limits_{\mathbb{R}^{n}}\big(f(x)\big)^{r}u(x)dx.
\end{eqnarray*}
\end{proof}

\section{the main results }

To formulate the main results we need to prove

\begin{proposition}\label{th4.2}
Let $u$ be a weight on $\mathbb{R}^{n}$ and $p:\mathbb{R}^{n}\longrightarrow \mathbb{R}_{+}$
such that $0<p^-\leq p^+<\infty$, and assume that $\varphi_{p(\cdot),u(0+)}=0.$ The following statements are equivalent:\\
(a) There exists a positive constant $C$ such that for any $f\in DR({\Bbb{R}}^n)$,
$$\int\limits_{\mathbb{R}^{n}}\big(Hf(x)\big)^{p(x)}u(x)dx\leq C\int\limits_{\mathbb{R}^{n}}\big(f(x)\big)^{p(x)}u(x)dx.\eqno{(8)}$$
(b) For any $r, s>0$,
$$\int\limits_{|x|>r}\bigg(\frac{r}{s|x|^{n}}\bigg)^{p_0}u(x)dx\leq C\int\limits_{B(0,r)}\frac{|x|^{(1-n)p_0}u(x)}{s^{p_0}}dx.\eqno{(9)}$$
(c)  $p_{|_{{\textit{supp u}}}} \equiv p_0 $\ \ \  a.e  and $u\in B_{p_{0}}.$
\end{proposition}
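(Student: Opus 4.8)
The plan is to follow the architecture of Theorem B, proving the cycle $(a)\Rightarrow(b)\Rightarrow(c)\Rightarrow(a)$, with the passage between $\mathbb{R}^n$ and $\mathbb{R}_+$ supplied by the polar-coordinate reduction of the preceding Lemma. Writing a radially decreasing $f$ as $f(x)=f(|x|)$ and using $\int_{|y|<\rho}f=|S_0|\int_0^\rho\tau^{n-1}f(\tau)\,d\tau$, one recognizes conditions $(9)$ and $(c)$ as the $n$-dimensional transcriptions of conditions $(5)$ and $(c)$ of Theorem B; the weight $|x|^{(1-n)p}$ in $(9)$ is the image of the Jacobian $\tau^{n-1}$ under the substitution $\bar f(t)=t^{n-1}f(t)$, and the constant $p_0$ of $(c)$ is the common value that $p$ is forced to assume on $\operatorname{supp}u$. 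Throughout I read the exponents in $(9)$ as the genuine variable $p(\cdot)$, which the equivalence identifies a.e.\ with the constant $p_0$ on $\operatorname{supp}u$.

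For $(a)\Rightarrow(b)$ I would test the modular inequality $(8)$ with the radially decreasing functions $f_{r,s}(x)=s^{-1}|x|^{1-n}\chi_{B(0,r)}(x)$. A direct computation gives $\int_{\mathbb{R}^n}f_{r,s}^{\,p(x)}u\,dx=\int_{B(0,r)}s^{-p(x)}|x|^{(1-n)p(x)}u(x)\,dx$, which is exactly the right-hand side of $(9)$, while for $|x|>r$ one has $Hf_{r,s}(x)=\frac{|S_0|\,r}{s|x|^{n}}$, so discarding the contribution of $B(0,r)$ to the left-hand side of $(8)$ yields $(9)$ with $|S_0|$ absorbed into $C$. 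For $(c)\Rightarrow(a)$ the hypothesis $p\equiv p_0$ a.e.\ on $\operatorname{supp}u$ collapses $(8)$ into the constant-exponent inequality $\int_{\mathbb{R}^n}(Hf)^{p_0}u\le C\int_{\mathbb{R}^n}f^{p_0}u$ for all $f\in DR(\mathbb{R}^n)$; by the Lemma with $r=p_0$ this is equivalent to condition $(7)$, and $(7)$ is nothing but $(9)$ once the equal exponents let the powers of $s$ cancel, i.e.\ it is the assertion $u\in B_{p_0}$. Thus this implication is immediate from the Lemma together with Theorem A.

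The crux, and the step I expect to be the main obstacle, is $(b)\Rightarrow(c)$: one must deduce from the validity of $(9)$ for \emph{all} $r,s>0$ that $p$ is a.e.\ constant on $\operatorname{supp}u$, after which the residual content of $(9)$ is precisely $u\in B_{p_0}$. The feature to exploit is the inhomogeneity of the estimate in the free parameter $s$, which encodes the dilation $f\mapsto\lambda f$ that the modular inequality does not tolerate: as $s\to\infty$ the two sides of $(9)$ are governed by the infimum of $p$ over the tail $\{|x|>r\}$ and over $B(0,r)$ respectively, and as $s\to 0$ by the corresponding suprema, so demanding a uniform $C$ across all $s$ and all $r$ forces these extrema to coincide. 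Here the hypothesis $\varphi_{p(\cdot),u(0+)}=0$ is essential: it suppresses the oscillation of $p$ near the origin, where the average defining $H$ is most singular, thereby anchoring the common value and giving $p^+_u=p^-_u=p_0$ via $(1)$. The delicate points will be making the $s\to 0,\infty$ asymptotics of both integrals rigorous on the level sets of $p$, and verifying that the non-radial dependence of $p(x)$ is genuinely eliminated on $\operatorname{supp}u$, which is what retroactively legitimates the polar reduction underlying the whole argument.
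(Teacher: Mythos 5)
Your proposal is correct and follows essentially the same route as the paper's proof: the same test functions $f_{r,s}(x)=s^{-1}|x|^{1-n}\chi_{B(0,r)}(x)$ for (a)$\Rightarrow$(b), the same contradiction mechanism for (b)$\Rightarrow$(c) via level sets of $p$ with $s\downarrow 0$ playing against the essential supremum and $s\uparrow\infty$ against the essential infimum (the paper fixes $r=\delta$ at a radius where the oscillation $\varphi_{p(\cdot),u}$ would fail to be constant), and the collapse to the constant-exponent case handled by the polar-coordinate Lemma together with the Arino--Muckenhoupt theorem for (c)$\Rightarrow$(a). Your decision to read the exponents in $(9)$ as the variable $p(x)$ rather than $p_0$ is exactly what the paper's own derivation produces and uses, so no discrepancy arises.
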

\begin{proof} We use the arguments of \cite{SJ}.
To show  that (a) implies (b) it is enough to test the modular inequality (8)
for the function $f_{r,s}(x)=\frac{1}{s}\chi_{B(0,r)}(x)|x|^{1-n}$, $s, r>0$. Indeed, it can be checked that
$$
Hf_{r,s}(x)=\left\{
  \begin{array}{ll}
    \frac{1}{|x|^{n}s}\int\limits_{|y|\leq|x|}|y|^{1-n}dy, & \hbox{if}\ |x|\leq r;  \vspace{.5cm}\\
     \frac{1}{|x|^{n}s}\int\limits_{|y|\leq r}|y|^{1-n}dy,  & \hbox{if}\ |x|> r
  \end{array}
\right. .
$$

Further, we find that
$$\int\limits_{|x|>r}u(x)\big(Hf_{r,s}\big)^{p(x)}dx
\leq\int\limits_{\mathbb{R}^{n}}u(x)\big(Hf_{r,s}\big)^{p(x)}dx
\leq C\int\limits_{\mathbb{R}^{n}}u(x)\bigg(\frac{1}{s}\chi_{B(0,r)}(x)|x|^{1-n}\bigg)^{p(x)}dx.$$
Therefore
$$\int\limits_{|x|>r}u(x)\bigg(\frac{r}{s|x|^{n}}\bigg)^{p(x)}dx\leq C\int\limits_{B(0,r)}\frac{|x|^{(1-n)p(x)}u(x)}{s^{p(x)}}dx. $$
To obtain $(c)$ from $(b)$ we are going to prove that condition $(b)$ implies that $\varphi_{p(\cdot),u(\delta)}$ is a constant function, namely $\varphi_{p(\cdot),u(\delta)}=p^+_u-p^-_u  \ \ \hbox{for all } \delta>0.$  This fact and the hypothesis on $\varphi_{p(\cdot),u(\delta)}$ implies that $\varphi_{p(\cdot),u(\delta)}\equiv 0,$ and hence due to $(1)$,
$$p_{|_{\mbox{supp u}}}\equiv p^+_u- p^-_u\equiv p_0 \ \ \ \ \hbox{a.e.}.$$  Finally  $(9)$ means that $u\in B_{p_{0}}.$
Let us suppose that $\varphi_{p(\cdot),u}$ is not constant. Then one of the following conditions  hold:\\
(i) there exists $\delta>0$ such that
$$\alpha=\hbox{ess}\!\!\!\!\!\!\!\!\!\!\!\!\!\!\!\!\ \sup\limits_{x\in B(0,\delta)\cap \hbox{supp u} }\!\ p(x)<p^+_u<\infty,\eqno{(10)}$$
and  hence, there exists $\epsilon>0$ such that
$$\big|\{|x|>\delta:p(x)\geq\alpha+\epsilon\}\cap\mbox{supp}\ \mbox{u}\big|>0,$$
or\\
(ii) there exists $\delta>0$ such that
$$\beta=\hbox{ess}\!\!\!\!\!\!\!\!\!\! \!\!\!\!\!\!\!\ \inf\limits_{x\in B(0,\delta)\cap \hbox{supp}\ u}\!\ p(x)>p^-_u>0,\eqno{(11)}$$
and  then, for some  $\epsilon>0,$
$$\big|\{|x|>\delta:p(x)\leq\beta-\epsilon\}\cap\mbox{supp}u\big|>0.$$
In the case (i) we observe that condition (b)  for $r=\delta$, implies that
$$\int\limits_{|x|>\delta}\bigg(\frac{\delta}{s}\bigg)^{p(x)}\frac{u(x)}{|x|^{np(x)}}dx\leq C\int\limits_{B(0,\delta)}\frac{|x|^{(1-n)p(x)}u(x)}{s^{p(x)}}dx.$$
 Then using (10) we obtain, for $s<\min(1,\delta)$,
$$\bigg(\frac{\delta}{s}\bigg)^{\alpha+\epsilon}\int\limits_{\{|x|\geq\delta:p(x)
\geq\alpha+\epsilon\}}\frac{u(x)}{|x|^{np(x)}}dx\leq\frac{C}{s^{\alpha}}\int\limits_{B(0,\delta)}u(x)|x|^{(1-n)p(x)}dx,$$
which is clearly a contradiction if we let $s\downarrow0.$
Similarly in the case (ii) let us consider the same condition (b) for $r=\delta$, and fix now $s>1.$  Taking into account $(11)$ we find that:
$$\frac{1}{s^{\beta-\epsilon}}\int\limits_{\{|x|\geq\delta:p(x)
\leq\beta-\epsilon\}}\bigg(\frac{\delta}{|x|^{n}}\bigg)^{p(x)}u(x)dx
\leq\frac{C}{s^{\beta}}\int\limits_{B(0,\delta)}|x|^{(1-n)p(x)}u(x)dx,$$
which  is a contradiction if we let $s\uparrow\infty.$\\
Finally, the fact that condition (c) implies (a) follows from[1,Theorem 1.7]
\end{proof}

\begin{theorem}
Let $u$ be a weight on $(0,\infty)$ and $p:\mathbb{R}_{+}\longrightarrow \mathbb{R}_{+}$
such that $0<p^-\leq p^+<\infty$. Assume that $\psi_{p(\cdot),v(0^+)}=0.$ The following facts are equivalent:\\
$(i)$ There exists a positive constant $C$ such that for any $f\in D({\Bbb{R}}_+)$,
$$\int\limits_{\mathbb{R}_+}\big(R_{\alpha}f(x)\big)^{p(x)}v(x)dx\leq C\int\limits_{\mathbb{R}_+}\big(f(x)\big)^{p(x)}v(x)dx. $$

$(ii)$ condition $(5)$ holds;

$(iii)$ condition $(c)$ of Theorem $B$ is be satiesfied.
\end{theorem}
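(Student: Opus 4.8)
The plan is to reduce statement $(i)$ for the Riemann--Liouville operator $R_\alpha$ to the corresponding modular inequality $(4)$ for the averaging operator $T$, and then to invoke Theorem B verbatim. The bridge is Proposition \ref{th3.1}(a), which supplies the pointwise two-sided estimate $c_1 Tf(x)\le R_\alpha f(x)\le c_2 Tf(x)$, valid for every $f\in D(\mathbb{R}_+)$ and every $x>0$, with constants $c_1,c_2$ independent of both $f$ and $x$.

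First I would transfer this pointwise comparison to the level of the modulars. Raising the two-sided estimate to the variable power $p(x)$ gives $c_1^{p(x)}(Tf(x))^{p(x)}\le (R_\alpha f(x))^{p(x)}\le c_2^{p(x)}(Tf(x))^{p(x)}$. The only place where the variable exponent must be handled with any care is here: since $0<p^-\le p(x)\le p^+<\infty$, the factors $c_1^{p(x)}$ and $c_2^{p(x)}$ remain trapped between fixed positive bounds depending only on $c_1,c_2,p^-,p^+$, namely $c_i^{p(x)}\in[\min(c_i^{p^-},c_i^{p^+}),\ \max(c_i^{p^-},c_i^{p^+})]$. Integrating against $v\,dx$ then produces positive constants $C_1,C_2$ with
$$C_1\int_{\mathbb{R}_+}(Tf(x))^{p(x)}v(x)\,dx\le \int_{\mathbb{R}_+}(R_\alpha f(x))^{p(x)}v(x)\,dx\le C_2\int_{\mathbb{R}_+}(Tf(x))^{p(x)}v(x)\,dx.$$
Because the right-hand side $\int_{\mathbb{R}_+}(f(x))^{p(x)}v(x)\,dx$ is common to both operators, this chain shows that $(i)$ holds for $R_\alpha$ if and only if the modular inequality $(4)$ holds for $T$ (with a possibly different constant). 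In other words, $(i)$ is equivalent to condition $(a)$ of Theorem B.

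Finally I would apply Theorem B, whose hypotheses coincide with ours exactly: an exponent $p$ on $\mathbb{R}_+$ with $0<p^-\le p^+<\infty$, a weight $v$, and the normalization $\psi_{p(\cdot),v(0^+)}=0$. Theorem B asserts the equivalence of its conditions $(a)$, $(5)$, and $(c)$; splicing this together with the equivalence $(i)\Leftrightarrow(a)$ just established closes the loop $(i)\Leftrightarrow(ii)\Leftrightarrow(iii)$. I do not expect a genuine obstacle. The entire content is the observation that the pointwise equivalence $R_\alpha f\approx Tf$ survives the passage to the $L^{p(\cdot)}$ modular precisely because the comparison constants are uniform in $x$, so the variable exponent introduces only bounded distortions that are absorbed into the constant; no new computation beyond the estimates of Proposition \ref{th3.1} is required.
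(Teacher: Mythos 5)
Your proposal is correct and follows exactly the route of the paper, whose proof is the one-line remark that the result follows from Theorem B together with Proposition \ref{th3.1}(a); you have merely made explicit the (routine but worth stating) point that the comparison constants $c_i^{p(x)}$ stay between $\min(c_i^{p^-},c_i^{p^+})$ and $\max(c_i^{p^-},c_i^{p^+})$, so the pointwise equivalence $R_\alpha f\approx Tf$ passes to the modular level. Nothing further is needed.
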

\begin{proof}
Proof follows by using Theorems B and Proposition \ref{th3.1}(a).
\end{proof}
\begin{theorem}
Let $u$ be a weight on $\mathbb{R}^{n}$ and $p:\mathbb{R}^{n}\longrightarrow \mathbb{R}_{+}$
such that $0<p^-\leq p^+<\infty$, and assume that $\varphi_{p(\cdot),u(0^+)}=0.$ The following facts are equivalent:\\
$(i)$ There exists a positive constant $C$ such that for any $f\in DR({\Bbb{R}}^n)$,
$$\int\limits_{\mathbb{R}^n}\big(I_{\alpha}f(x)\big)^{p(x)}u(x)dx\leq C\int\limits_{\mathbb{R}^n}\big(f(x)\big)^{p(x)}u(x)dx.$$

$(ii)$ condition $(9)$ holds;

$(iii)$ condition $(c)$ of Proposition \ref{th4.2} holds.
\end{theorem}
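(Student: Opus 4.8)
The plan is to reduce the theorem about the weighted truncated potential $I_\alpha$ to the corresponding statement for the averaging operator $H$, which has already been fully characterized in Proposition \ref{th4.2}. The key bridge is part $(b)$ of Proposition \ref{th3.1}, which asserts the pointwise equivalence $I_\alpha g \approx Hg$ for $g \in DR(\mathbb{R}^n)$ with constants independent of $g$ and $x$. This equivalence is exactly what lets us transfer a modular inequality from one operator to the other.

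First I would observe that since $0 < p^- \le p^+ < \infty$, the pointwise bounds $c_3 (Hg)(x) \le (I_\alpha g)(x) \le c_4 (Hg)(x)$ may be raised to the variable power $p(x)$ and integrated against the weight $u$. Raising to the power $p(x)$ only changes the constants by factors controlled uniformly by $c_3^{p^-}, c_3^{p^+}, c_4^{p^-}, c_4^{p^+}$, since the constants $c_3, c_4$ are independent of $x$. Hence there are positive constants $C_1, C_2$, depending only on $c_3, c_4, p^-, p^+$, with
$$
C_1 \int_{\mathbb{R}^n} \bigl(Hf(x)\bigr)^{p(x)} u(x)\,dx \le \int_{\mathbb{R}^n} \bigl(I_\alpha f(x)\bigr)^{p(x)} u(x)\,dx \le C_2 \int_{\mathbb{R}^n} \bigl(Hf(x)\bigr)^{p(x)} u(x)\,dx
$$
for every $f \in DR(\mathbb{R}^n)$. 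Consequently the modular inequality in $(i)$ for $I_\alpha$ holds for all radially decreasing $f$ if and only if the analogous modular inequality $(8)$ for $H$ holds, because the two left-hand sides are comparable while the right-hand sides are identical.

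With that comparability in hand, the equivalences follow directly from Proposition \ref{th4.2}. Specifically, $(i)$ of the present theorem is equivalent to $(a)$ of Proposition \ref{th4.2} by the displayed two-sided modular estimate; $(a)$ is equivalent to the $B_{p_0}$-type testing condition $(9)$, which is exactly statement $(ii)$ here; and $(9)$ is in turn equivalent to condition $(c)$ of Proposition \ref{th4.2}, namely that $p$ is almost everywhere constant on $\mathrm{supp}\,u$ with common value $p_0$ and that $u \in B_{p_0}$, which is statement $(iii)$. The hypothesis $\varphi_{p(\cdot),u(0^+)}=0$ is precisely the standing assumption required to invoke Proposition \ref{th4.2}, so the chain of equivalences closes.

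The only genuine point requiring care, and the step I expect to be the main obstacle, is the passage from the pointwise equivalence to the modular equivalence when the exponent $p(\cdot)$ is variable rather than constant. One must verify that the constant in $I_\alpha g \approx Hg$ survives exponentiation uniformly in $x$; this is where the finiteness conditions $p^- > 0$ and $p^+ < \infty$ are essential, since an unbounded exponent could amplify the multiplicative constant without control. Once this uniform bound is secured the remainder is purely an appeal to Proposition \ref{th4.2}, so the proof is short: invoke Proposition \ref{th3.1}$(b)$ to establish the modular comparability, then cite Proposition \ref{th4.2} for the equivalence of $(a)$, $(9)$, and $(c)$.
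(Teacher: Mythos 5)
Your proposal is correct and follows exactly the paper's route: the paper's own proof is a one-line appeal to Proposition \ref{th3.1}$(b)$ together with Proposition \ref{th4.2}, which is precisely your argument. You have simply made explicit the (routine but worth stating) step that the pointwise equivalence $I_\alpha f \approx Hf$ transfers to the modular level with constants controlled via $0 < p^- \le p^+ < \infty$.
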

\begin{proof}
Proof follows by using Propositions  \ref{th4.2} and Proposition \ref{th3.1} (b).
\end{proof}

\vskip+1cm
{\bf Acknowledgement.} The authors are grafeful to Prof. A. Meskhi for drawing out attention to the problem studied in this paper and helpful remarks.

\vskip+0.2cm

Authors' Addresses:
\vskip+0.1cm

%
%
%
%
%


G. Murtaza:

Department of Mathematics,

 GC University, Faisalabad, Pakistan

Email: gmnizami@@googlemail.com

 M. Sarwar:

 Department of Mathematics,

 University of Malakand, Chakdara,

  Dir Lower, Khyber Pakhtunkhwa, Pakistan

  E-mail: sarwar@uom.edu.pk; sarwarswati@gmail.com

 \end{document}